\newtheorem{remark}{Remark}[section]
\title{Sequential Stochastic Control (Single or Multi-Agent) Problems Nearly Admit Change of Measures with Independent Measurements}
\author{Ian Hogeboom-Burr \and Serdar Y\"{u}ksel\thanks{The authors are with the Dept. of Mathematics and Statistics, Queen's University, Kingston K7L 3N6, ON, Canada, {\tt\small \{15ijhb,yuksel\}@queensu.ca}.}}
\begin{document}
\maketitle

\begin{abstract}
Change of measures has been an effective method in stochastic control and analysis; in continuous-time control this follows Girsanov's theorem applied to both fully observed and partially observed models, in decentralized stochastic control (or stochastic dynamic team theory) this is known as Witsenhausen's static reduction, and in discrete-time classical stochastic control Borkar has considered this method for partially observed Markov Decision processes (POMDPs) generalizing Fleming and Pardoux's approach in continuous-time. This method allows for equivalent optimal stochastic control or filtering in a new probability space where the measurements form an independent exogenous process in both discrete-time and continuous-time and the Radon-Nikodym derivative (between the true measure and the reference measure formed via the independent measurement process) is pushed to the cost or dynamics. However, for this to be applicable, an absolute continuity condition is necessary. This raises the following question: can we perturb any discrete-time sequential stochastic control problem by adding some arbitrarily small additive (e.g. Gaussian or otherwise) noise to the measurements to make the system measurements absolutely continuous, so that a change-of-measure (or static reduction) can be applicable with arbitrarily small error in the optimal cost? That is, are all sequential stochastic (single-agent or decentralized multi-agent) problems $\epsilon$-away from being static reducible as far as optimal cost is concerned, for any $\epsilon > 0$? We show that this is possible when the cost function is bounded and continuous in controllers' actions and the action spaces are convex. We also note that the solution and the cost obtained for the perturbed system is realizable (under a randomized policy) for the original model.  
\end{abstract}

\section{Introduction} 

Reformulation of stochastic dynamic control and filtering problems via a change of measure has proven to be very effective for existence, approximations, filtering, and dynamic programming methods under a variety of contexts. In discrete-time, such a formulation leads to an equivalent system under which measurements are exogenous independent variables. For continuous-time stochastic control, this method has been very popular since the analysis in \cite{benevs1971existence}, to avoid stringent conditions on control policies to satisfy the existence of strong solutions to controlled stochastic differential equations (see also \cite{davis1972information,davis1973dynamic}), which allows the control to be a function of an independent Brownian innovations process. This was utilized by Fleming and Pardoux \cite{FlPa82}, who introduced {\it wide-sense admissible} control policies for classical partially observed stochastic control problems (POMDPs), where first the measures were reduced to an exogenous Brownian process and a control policy was defined to be wide-sense admissible if the control and (exogenous) measurements were independent from future increments of the measurement process (see also \cite{bismut1982partially}, where this was utilized further without separating estimation and control in POMDPs). Borkar \cite{Bor00}, \cite{Bor07} obtained existence results in average cost POMDPs via generalizing Fleming and Pardoux's notion to discrete-time POMDPs, where, once again, first the measurements were made independent and then the control and measurements form a wide-sense admissible process if they are independent from future measurements. In non-linear filtering theory such methods have found applications via the Kushner-Kallianpur-Striebel formula \cite{kushner2014partial}.

%

In decentralized stochastic control this method has allowed for rather mild existence and dynamic programming results \cite{YukselBasarBook,YukselWitsenStandardArXiv,YukselSaldiSICON17,gupta2014existence} and approximation results \cite{saldiyuksellinder2017finiteTeam}. We also refer the reader to \cite{charalambous2016decentralized} for relations with classical continuous-time stochastic control where the relation with Girsanov's classical measure transformation \cite{girsanov1960transforming}, \cite{benevs1971existence} is recognized.

 The common thread for the above change of measure arguments is an absolute continuity condition involving a family of conditional probabilities on the measurement variables (given the past history or state realizations) with respect to a common reference measure. 
 
 For many applications, this is not applicable a priori. In particular, if the measurements are deterministically related to the history of actions and states, such a condition does not apply.
 
 In this paper, we focus on the sequential discrete-time (possibly decentralized) setup and answer the following question: Can we perturb any discrete-time sequential (single-agent or decentralized multi-agent) stochastic control problem by adding some arbitrarily small additive (e.g. Gaussian or otherwise) noise to the measurements to make the measurement kernels absolutely continuous with respect to a fixed reference measure, so that a change-of-measure (also known as a static reduction) can be made with arbitrarily small error in the optimal cost? That is, are all sequential stochastic  problems $\epsilon$-away from being static reducible as far as optimal cost is concerned, for any $\epsilon > 0$? We show that this is possible when the cost function is bounded and continuous in agents' actions and the action spaces are convex.

We note that such a result is not direct and is in fact surprising given the general lack of continuity in the space of information structures under weak convergence (which is indeed the notion considered in our positive result when we will, under weak convergence, approach a dirac delta measure by a sequence of Gaussian measures with decreasing variances): \cite[Section 3.1.1]{YukselOptimizationofChannels} shows that the optimal cost in partially observable stochastic control is {\it not} weakly continuous (though upper semi-continuous) on the space of measurement channels in general, and in fact this negative result also applies to setwise perturbations of measurement channels \cite[Section 3.2.1]{YukselOptimizationofChannels} even when the costs considered are bounded continuous and all spaces considered are compact (see also \cite[Section 4]{hogeboom2021continuity}). To obtain our positive result in this paper, we will consider a cautious construction where Lusin's theorem will be utilized twice, first to approximate functions and second to approximate stochastic kernels with their continuous and weakly continuous counterparts, respectively.

We finally note that there exist two lines of relevant studies on this general subject in the stochastic control literature in both discrete-time and continuous-time, for classical fully observed or partially observed systems and often under stringent conditions. One line of work  \cite{hijab1984asymptotic,baras1988dynamic,baras1982dynamic,heunis1987non,james1994risk,dai1996connections,reddy2021some} studies small-noise limits of stochastic systems (more commonly in the fully observed setup) where large-deviations theoretic results are obtained, which often apriori require stringent regularity or smoothness properties on system (or measurement) dynamics to facilitate such analysis. While our results do not give rates of convergence, our results do not require even continuity of the kernels in the state or actions. 

A second line of research involves vanishing perturbation or vanishing viscosity solutions. For fully observed or partially observed controlled diffusions (or deterministic optimal control) non-degenerate diffusions facilitate powerful stochastic analysis methods (such as convexification, compactification, closedness under weak convergence etc.), and therefore it is a natural question to see what happens when one perturbs an original degenerate system with additive noise, leading to a vanishing viscosity solution \cite{fleming2012deterministic,kushner2014partial,fleming2006controlled} for the limit as the noise vanishes (see e.g. \cite[Chapter 7]{arapostathis2012ergodic} for non-degenerate partially observed diffusion problems where relations with viscosity solutions are emphasized). Related studies, under strong regularity conditions or specific models, include \cite{fleming2012deterministic}, \cite{ciampa2021vanishing} and \cite{bianchini2005vanishing}. Our paper is different in two directions: (i) For such methods often one needs tightness and closedness (under weak topology) conditions on the sets of strategic measures to extract a converging subsequence as the perturbation coefficient vanishes; this is often too much to ask for decentralized stochastic control (see e.g. \cite[Theorem 5.5]{YukselWitsenStandardArXiv}) since conditional independence properties are not closed under weak convergence \cite[Theorem 2.7]{YukselSaldiSICON17}. (ii) Even when the information structure is {\it classical} (i.e., information is expanding with perfect recall), which is the case in fully observed or partially observed single-agent setups, weak continuity of the transition kernel is needed for the closedness of strategic measures under weak convergence \cite{Schal}. 

In our paper, these conditions may not be applicable and are not needed.  




\section{Sequential Stochastic Control: Witsenhausen's Intrinsic Model}\label{witsenInfoStructureReview}
 
In the following, we present the general (possibly decentralized) discrete-time setup, as well as some further characterizations as laid out by Witsenhausen, termed as {\it the Intrinsic Model} \cite{wit75}. \index{Witsenhausen's Intrinsic Model}\index{Sequential teams}
In this model, any action applied at any given time is regarded as applied by an individual decision maker (DM), who acts only once. One advantage of this model, in addition to its generality, is that the definitions regarding information structures can be compactly described.

Suppose that in a stochastic control system considered below, there is a pre-defined order in which the decision makers act. Such systems are called {\it sequential stochastic control systems} or {\it sequential teams}. In the context of a sequential team, the {\it Intrinsic Model} has three components:

\begin{itemize}
\item A collection of {\it measurable spaces} $\{(\Omega, {\cal F}),
(\mathbb{U}^i,{\cal U}^i), (\mathbb{Y}^i,{\cal Y}^i), i \in {\cal N}\}$, with ${\cal N}:=\{1,2,\cdots,N\}$, specifying the system's distinguishable events, and the control and measurement spaces. Here $N=|{\cal N}|$ is the number of control actions taken, and each of these actions is taken by an individual (different) DM (hence, even a DM with perfect recall can be
regarded as a separate decision maker every time it acts). The pair $(\Omega, {\cal F})$ is a
measurable space (on which an underlying probability may be defined). The pair $(\mathbb{U}^i, {\cal U}^i)$
denotes the measurable space from which the action, $u^i$, of decision maker $i$ is selected. The pair $(\mathbb{Y}^i,{\cal Y}^i)$ denotes the measurable observation/measurement space for {\bf DM}$i$.

\item A {\it measurement constraint} which establishes the connection between the observation variables and the system's distinguishable events. The $\mathbb{Y}^i$-valued observation variables are given by $y^i=\eta^i(\omega,{\bf u}^{[1,i-1]})$, ${\bf u}^{[1,i-1]}=\{u^k, k \leq i-1\}$, $\eta^i$ measurable functions and $u^k$ denotes the action of {\bf DM}$k$. Hence, the information variable $y^i$ induces a $\sigma$-field, $\sigma({\cal I}^i)$ over $\Omega \times \prod_{k=1}^{i-1} \mathbb{U}^k$
\item A {\it design constraint} which restricts the set of admissible $N$-tuple control laws $\underline{\gamma}= \{\gamma^1, \gamma^2, \dots, \gamma^N\}$, also called
{\it designs} or {\it policies}, to the set of all
measurable control functions, so that $u^i = \gamma^i(y^i)$, with $y^i=\eta^i(\omega,{\bf u}^{[1,i-1]})$, and $\gamma^i,\eta^i$ measurable functions. Let $\Gamma^i$ denote the set of all admissible policies for {\bf DM}$i$ and let ${\bf \Gamma} = \prod_{k} \Gamma^k$. In our setup, we will also allow for randomized policies, with independent randomizations, for each DM. 
\end{itemize}

We also introduce a fourth ingredient:
\begin{itemize}
\item A {\it probability measure} $P$ defined on $(\Omega, {\cal F})$ which describes the measures on the random events in the model. 
\end{itemize}

Under the setup for this paper, we will assume $\mathbb{Y}^i \subset \mathbb{R}^{k_i}$, for some $k_i \in \mathbb{Z}_{\geq 1}$, for every $i \in \mathcal{N}$, and we will require admissible policies be defined from $\mathbb{R}^{k_i}$ to $\mathbb{U}^i$. All spaces are assumed to be Borel. 




We note that the intrinsic model of Witsenhausen as defined in \cite{wit75} provides a set-theoretic characterization of information fields; however, for standard Borel spaces, the model above is equivalent to that of Witsenhausen's. We will utilize this functional representation in our analysis to follow. We also note that Witsenhausen has shown that the intrinsic model can be written as a special case of an equivalent model (also due to Witsenhausen \cite{wit88}), which has a similar structure as the one above (with functional descriptions on the measurement variables) with only some slight differences.

Under this intrinsic model, (the information structure of) a sequential stochastic control (or team) problem is {\it dynamic} if the
information available to at least one DM is affected by the action of at least one other DM. A decentralized problem is {\it static} if the information available at every decision maker is only affected by exogenous disturbances (Nature) $\omega_0 \in \Omega_0$; that is no decision maker can affect the information for another decision maker.

Information structures can also be classified as {\it classical}, {\it quasi-classical} or {\it nonclassical}. An Information Structure (IS) $\{y^i, 1 \leq i \leq N \}$ is {\it classical} if $y^i$ contains all of the information available to {\bf DM}$k$ for $k < i$. An IS is {\it quasi-classical} or {\it partially nested}, if whenever $u^k$, for some $k < i$, affects $y^i$ through the measurement function $\eta^i$, $y^i$ contains $y^k$ (that is $\sigma(y^k) \subset \sigma(y^i)$). An IS which is not partially nested is {\it nonclassical}.

Classical stochastic control problems (where a single controller acts repeatedly over time with increasing information), i.e., those that are usually known as MDPs (Markov Decision Processes) or POMDPs (Partially Observed Markov Decision Processes), are easily seen to fit in this general formulation: these are {\it classical}
 according to the classification presented above. 
 
Now, consider the team policy \[\underline{\gamma} = \{\gamma^1, \cdots, \gamma^N\}\]
and let a cost function be defined as:
\begin{eqnarray}\label{lossF}
J(\underline{\gamma}) = E^{\underline{\gamma}}[c(\omega_0,{\bf u})] = E[c(\omega_0,\gamma^1(y^1),\cdots,\gamma^N(y^N))],
\end{eqnarray}
for some non-negative measurable loss (or cost) function $c: \Omega_0 \times \prod_k \mathbb{U}^k \to \mathbb{R}_+$. Here, we have the notation ${\bf u}=\{u^t, t \in {\cal N}\}$, and $\omega_0$ may be viewed as the cost function relevant exogenous variable contained in $\omega$. 

\begin{definition}\label{Def:TB1}\index{Optimal team cost}
For a given sequential stochastic team problem with a given information
structure, $\{J; \Gamma^i, i\in {\cal N}\}$, a policy (strategy) $N$-tuple
${\underline \gamma}^*:=({\gamma^1}^*,\ldots, {\gamma^N}^*)\in {\bf \Gamma}$ is
an {\it optimal team policy} if
\begin{equation}J({\underline \gamma}^*)=\inf_{{{\underline \gamma}}\in {{\bf \Gamma}}}
J({{\underline \gamma}})=:J^*. \label{eq:5}
\end{equation} 
The expected cost achieved by this strategy, $J^*$, is the optimal cost.
\end{definition}

In the following, we will denote by bold letters the ensemble of random variables across the DMs; that is ${\bf y}=\{y^i, i=1,\cdots,N\}$ and ${\bf u}=\{u^i, i=1,\cdots,N\}$.

\subsection{Static Reduction of Stochastic Dynamic Problems via Change of Measures}\label{staticReductionDynamicTeams}

Following Witsenhausen \cite[Eqn (4.2)]{wit88}, as reviewed in \cite[Section 3.7]{YukselBasarBook}, we say that two information structures are equivalent if: (i) The policy spaces are equivalent/isomorphic in the sense that policies under one information structure are realizable under the other information structure, (ii) the costs achieved under equivalent policies are identical almost surely, and (iii) if there are constraints in the admissible policies, the isomorphism among the policy spaces preserves the constraint conditions. 

A large class of sequential stochastic control or team problems admit an equivalent information structure which is static. This is called the {\it static reduction} of a dynamic team problem. 

For the results to be presented, under a static reduction, we will have {\it individual measurement variables be independent} of each other as well as $\omega_0$. Witsenhausen refers to such an information structure as {\it independent static} in \cite[Section 2.4(e)]{wit88}. This is not possible in general for every sequential problem which admits a static reduction, for example quasi-classical team problems with linear models \cite{HoChu} do not admit such a further reduction, since the measurements are partially nested. 

Consider (a static or a dynamic) team setting according to the intrinsic model where each {\bf DM}$i$ measures $y^i$
 We will, for every $1 \leq n \leq N$, view the relation
\begin{eqnarray}\label{ProbForm}
 P(dy^n | \omega_0,y^1,y^2,\cdots,y^{n-1}, u^1,u^2,\cdots,u^{n-1}),
\end{eqnarray} 
  as a (controlled) stochastic kernel (to be defined later), and through standard stochastic realization results (see \cite[Lemma 1.2]{gihman2012controlled} or \cite[Lemma 3.1]{BorkarRealization}), we can represent this kernel in a functional form through 
 \begin{eqnarray}\label{fnForm}
 y^n=g_n(\omega_0,\nu,y^1,y^2,\cdots,y^{n-1},u^1,u^2,\cdots,u^{n-1})
 \end{eqnarray} 
 for some independent $[0,1]$-valued $\nu$ and measurable $g_n$.

This team admits an {\it independent-measurements} reduction provided that the following absolute continuity condition holds: For every $t \in {\cal N}$, there exists a reference probability measure $Q_t$ and a function $f_t$ such that for all Borel $S$:
\begin{eqnarray}\label{staticReduc}
&& P(y^t \in S | \omega_0,u^1,u^2,\cdots,u^{t-1}, y^1,y^2,\cdots,y^{t-1}) \nonumber \\
&& \quad \quad = \int_{S} f_t(y^t,\omega_0,u^1,u^2,\cdots,u^{t-1},y^1,y^2,\cdots,y^{t-1}) Q_t(dy^t),
\end{eqnarray}

We can then write (since the action of each DM is determined by the measurement variables under a given policy)
\begin{eqnarray}
&& P(d\omega_0,d{\bf y}, d{\bf u}) \nonumber \\
&&= P(d\omega_0) \prod_{t=1}^N \bigg(f_t(y^t,\omega_0,u^1,u^2,\cdots,u^{t-1},y^1,y^2,\cdots,y^{t-1}) Q_t(dy^t) 1_{\{\gamma^t(y^t) \in du^t\}}\bigg). \nonumber
\end{eqnarray}
The cost function $J(\underline{\gamma})$ can then be written as
\begin{eqnarray}\label{staticReduc2}
&&J(\underline{\gamma}) \nonumber \\
&&= \int P(d\omega_0) \prod_{t=1}^N (f_t(y^t,\omega_0,u^1,u^2,\cdots,u^{t-1},y^1,y^2,\cdots,y^{t-1}) Q_t(dy^t))  c(\omega_0,{\bf u})
\end{eqnarray}
with $u^k = \gamma^k(y^k)$ for $1 \leq k \leq N$, and where now the measurement variables can be regarded as independent from each other, and also from $\omega_0$, and by incorporating the $\{f_t\}$ terms into $c$, we can obtain an equivalent {\it static team} problem. Hence, the essential step is to appropriately adjust the probability space and the cost function.

\begin{remark} [Change of Measure Formula] Denote the joint probability measure on $(\omega_{0}, u^{1},\dots, u^{N}, y^{1}, \dots, y^{N})$ by $P$, and the probability measure of $\omega_{0}$ by $\mathbb{P}^{0}$. If the preceding absolute continuity condition (\ref{staticReduc}) holds, then (under any admissible policy profile $\gamma^1, \cdots, \gamma^N$) there exists a joint reference probability measure ${\mathbb{Q}}$ on $(\omega_{0}, u^{1},\dots, u^{N}, y^{1}, \dots, y^{N})$ such that the probability measure $P$ is absolutely continuous with respect to ${\mathbb{Q}}$ ($P \ll {\mathbb{Q}}$), so that for every Borel set $A$ in $(\Omega_{0}\times \prod_{i=1}^{N}(\mathbb{U}^{i}\times\mathbb{Y}^{i}))$
\begin{eqnarray}
P(A)= \int_{A} \frac{dP}{d{\mathbb{Q}}}\mathbb{Q}(d\omega_{0}, du^{1},\dots, du^{N}, dy^{1}, \dots, dy^{N})\label{eq:nonrandom},
\end{eqnarray}
where the reference probability measure
\begin{eqnarray}
\mathbb{Q}(d\omega_{0}, du^{1},\dots, du^{N}, dy^{1}, \dots, dy^{N}):=\mathbb{P}^{0}(d\omega_0)\prod_{i=1}^{N} Q^{i}(dy^{i}) 1_{\{\gamma^{i}(y^{i}) \in du^{i}\}},\label{eq:Q}
\end{eqnarray}
leads to a Radon-Nikodym derivative:
\begin{eqnarray}
\frac{dP}{d\mathbb{Q}}(\omega_{0},u^{1}, \dots, u^{1},y^{1}\dots, y^{N})=\prod_{i=1}^{N}f^{i}(y^{i},\omega_{0}, u^{1}, \dots, u^{i-1},y^{1},\dots, y^{i-1})\label{eq:dpdq}.
\end{eqnarray}
\end{remark}

The new cost function may now explicitly depend on the measurement values, such that
\begin{eqnarray}
c_s(\omega_0,{\bf y}, {\bf u}) = c(\omega_0,{\bf u}) \prod_{i=1}^N f_i(y^i,\omega_0,u^1,u^2,\cdots,u^{i-1},y^1,y^2,\cdots,y^{i-1}). \label{c_sDefn}
\end{eqnarray}

Here we can reformulate even a static team to one which is, clearly still static, but now with independent measurements which are also independent from the cost relevant exogenous variable $\omega_0$. 


As a common example, consider the additive noise measurement model, for some $f$, \[y^1=f(x) + w,\]
with $w$ a noise variable which admits a probability density function $\eta$ (not necessarily positive everywhere). Then, for any Borel $B$
\[P(y^1 \in B | x) = \int \eta(y^1-f(x)) dy^1 =  \int \frac{\eta(y^1-f(x))}{\kappa(y^1)} \kappa(y^1)dy^1  \]
for any $\kappa$ which is a probability density function that is positive everywhere (such as the Gaussian). Thus, the measurement variable can be assumed to be $y^1 \sim \int_{\cdot} \kappa(y)dy$ which is independent of $x$ and has a measure that admits a probability density function $\kappa$. Note that here one or both of $\eta$ and $\kappa$ can be Gaussian as an important special case. 

This fact will be used for the main results of this paper. Since adding Gaussian noise to each DM's measurement will ensure the model satisfies the absolute continuity condition (\ref{fnForm}), if we can show that the optimal expected cost under a setup where each DM receives a measurement corrupted by an additive Gaussian noise channel (with zero mean) converges to the expected cost under the DMs' original channels as the variance of the noise goes to zero, then (\ref{fnForm}) can be satisfied while only altering the optimal expected cost by an arbitrarily small amount. 

\section{Sequential stochastic control problems are nearly static reducible}

\subsection{Assumptions and supporting results}
Recall from (\ref{ProbForm}) and the preceding discussion that in a dynamic team setup the probability measure on the measurements ${\bf y}$ is not fixed as opposed to the static case. Let, for all $n \in {\cal N}$, 
\[h_n = \{\omega_0,y^1,u^1,\cdots,y^{n-1},u^{n-1},y^n,u^n\},\]
and $p^n(dy^n|h_{n-1}) := P(dy^n | h_{n-1})$ be the transition kernel characterizing the measurements of DM~$n$ according to the intrinsic model. We note that this may be obtained by the relation:
\begin{eqnarray}
&& p^n(y^n \in \cdot | \, \omega_0,y^1,u^1,\cdots,y^{n-1},u^{n-1}) \nonumber \\
&& \, := P\bigg(\eta^n(\omega,{\bf u}^{[1,n-1]}) \in \cdot  \bigg| \, \omega_0,y^1,u^1,\cdots,y^{n-1},u^{n-1}\bigg) \label{kernelDefn2}
\end{eqnarray}

We will allow for an agent's measurements through their channel $p^i$ to be garbled by an additive Gaussian noise channel $\tau^i_m$, which provides the DM with measurement $z^i = y^i + q^i_m$, where $q^i_m \in \mathbb{R}^{k_i}$ is a vector of independent identically distributed $\mathbb{R}$-valued Gaussian noise with mean $0$ and variance $1/m$. This is the measurement that the DM then uses when selecting their action $u^i = \gamma^i(z^i)$ and becomes the measurement that can affect future DMs' measurements through their channels $p^{i+1}, \dots, p^N$. We denote the DM channels under the garbled setup by $p^1\tau^{1}_m, \dots, p^N\tau^{N}_m$. We denote the team-optimal cost (i.e. the expected cost achieved under optimal selection of DM policies) under this setup with $J^*(P, p^1\tau^1_m, \dots, p^N\tau^N_m)$, and the team-optimal cost under the original setup with $J^*(P, p^1, \dots, p^N)$.


We now introduce the following assumptions (where only A1 and A2 will be needed for our main result):

\begin{itemize}
\item[(A1)] The cost function $c$ is continuous and bounded.

\item[(A2)] Each {\bf DM}$i$'s action space $\mathbb{U}^i$ is a convex subsets of $\mathbb{R}^{n_i}$, for some $n_i \in \mathbb{Z}_{\geq1}$.

\item[(A3)] Each {\bf DM}$i$'s measurement channel given in (\ref{kernelDefn2}) is weakly continuous in the following sense: if $(z^1_m, u^1_m, \dots, z^{i-1}_m, u^{i-1}_m) \rightarrow (z^1, u^1, \dots, z^{i-1}, u^{i-1})$, then \[p^i(y^i \in \cdot | \omega_0, z^1_m, u^1_m, \dots, z^{i-1}_m, u^{i-1}_m) \rightarrow p^i(y^i \in \cdot | \omega_0, z^1, u^1, \dots, z^{i-1}, u^{i-1})\] weakly for all $\omega_0 \in \Omega_0$.
\end{itemize}

For example, through the stochastic realization functional form (\ref{fnForm}) with $y^n=g_n(\omega_0,\nu,z^1,z^2,\cdots,z^{n-1},u^1,u^2,\cdots,u^{n-1})$ for some independent $\nu$ and measurable $g_n$, we have that  if \[g_n(\omega_0,\nu, \cdot): (z^1, u^1, \dots, z^{n-1}, u^{n-1}) \mapsto y^n\]
is continuous, then the kernel would satisfy A3. 

We will show that under assumption A1 and A2
 \[\lim_{m \rightarrow \infty}J^*(P, p^1\tau^1_m, \dots, p^N\tau^N_m) = J^*(P, p^1, \dots, p^N).\]

From this, it follows that any sequential dynamic team problem is $\epsilon$-away in optimal cost from being static reducible. 

First, a result will be stated and proved under the additional assumption A3, in Theorem \ref{weakVer}. Then, this result will be relaxed to include measurement channels that do not necessarily satisfy A3 in Theorem \ref{mainThmNoWeak}, which is the main theorem of our paper. 

The proof will first use Lusin's theorem to construct continuous policies for each DM that approximate their equilibrium policies on a set of $1 - \epsilon_i$, for any $\epsilon_i > 0$. Then, it will be shown using a generalized dominated convergence theorem that the garbled information structure converges weakly to the ungarbled information structure. These steps will then be combined to prove the desired statement. The relaxation in Theorem \ref{mainThmNoWeak} will use Lusin's theorem an additional time to show that each agent's channel can be approximated by a channel that satisfies A3 on a set of measure $1 - \beta_i$, for any $\beta_i > 0$. 


Our approach is related to perturbing information structures under the weak convergence topology. This problem was studied in \cite{YukselOptimizationofChannels} with negative results presented showing that the optimal cost is not in general weakly continuous on the space of channels. In the following, however, by restricting the nature of allowable perturbations, we will establish a positive result on continuity.  

First we will require an extension of the forward direction to Blackwell's ordering of information structures \cite{Blackwell1951} to sequential dynamic teams in standard Borel spaces (see also \cite[Theorem 4.3.3]{YukselBasarBook}). 
\begin{lemma}\label{GarblingLemma}
For sequential dynamic team problems, if each agent's information under a set of measurement channels $(p^1, \dots, p^N)$ is a garbling of their information under another set of measurement channels $(\Tilde{p}_1, \dots, \Tilde{p}_N)$ then the team-optimal cost cannot be strictly lower under $(p^1, \dots, p^N)$ than under $(\Tilde{p}_1, \dots, \Tilde{p}_N)$. 
\end{lemma}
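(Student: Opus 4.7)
The plan is to adapt the classical Blackwell garbling argument to the sequential dynamic intrinsic model. The key idea is that any admissible policy in the less-informative setup $(p^1,\dots,p^N)$ can be realized as a randomized policy in the more-informative setup $(\tilde{p}^1,\dots,\tilde{p}^N)$ by having each DM first internally garble their own observation via an independent randomization, then apply the original $p$-setup policy to the garbled observation. Since the admissible class allows independent randomizations at each DM, this construction is admissible in the $\tilde{p}$ setup, so taking an infimum over $p$-policies yields $J^*(P,\tilde{p}^1,\dots,\tilde{p}^N) \leq J^*(P,p^1,\dots,p^N)$.

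Concretely, I would read the garbling hypothesis as follows: for each $i$ there exists a stochastic kernel $G^i(\cdot\mid\tilde{y}^i)$, depending only on the observation (not on any unobserved portion of the history), such that
\[
p^i(\,\cdot\mid\omega_0,u^1,\dots,u^{i-1}) \;=\; \int G^i(\,\cdot\mid\tilde{y}^i)\,\tilde{p}^i(d\tilde{y}^i\mid\omega_0,u^1,\dots,u^{i-1})
\]
for every conditioning history. Given any admissible $\underline{\gamma}=(\gamma^1,\dots,\gamma^N)$ for the $p$-setup, define the randomized policy $\tilde{\gamma}^i$ by: DM~$i$ observes $\tilde{y}^i$, draws $\hat{y}^i\sim G^i(\,\cdot\mid\tilde{y}^i)$ independently of everything else, and applies $u^i=\gamma^i(\hat{y}^i)$.

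The remaining step is to verify, by induction on the stage $i$, that the joint distribution of $(\omega_0,u^1,\dots,u^N)$ in the $\tilde{p}$ setup under $\underline{\tilde{\gamma}}$ coincides with that in the $p$ setup under $\underline{\gamma}$. Assuming the two joint laws of $(\omega_0,u^1,\dots,u^{i-1})$ already agree, the intrinsic model gives that $y^i$ (respectively $\hat{y}^i$) has conditional distribution depending only on $(\omega_0,u^1,\dots,u^{i-1})$, and the garbling identity displayed above shows these conditional distributions coincide; since $u^i=\gamma^i(\cdot)$ is then applied to the observation in both setups, the agreement propagates. Because the cost in (\ref{lossF}) depends only on $(\omega_0,u^1,\dots,u^N)$, the expected costs match, and infimizing over $\underline{\gamma}$ completes the argument.

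The main obstacle is conceptual rather than computational: one must formalize ``garbling'' so that it is meaningful in the dynamic setting, where the histories (including past measurements) have different joint laws in the $p$ and $\tilde{p}$ setups. The intrinsic model representation, together with insisting that $G^i$ depend only on the observation $\tilde{y}^i$, is what makes the per-stage Blackwell reduction chain cleanly into a sequential statement. A secondary technicality is measurability of the constructed randomized policies, which follows from standard stochastic realization arguments in the same spirit as (\ref{fnForm}), allowing $\hat{y}^i$ to be written as a measurable function of $\tilde{y}^i$ and an independent uniform $[0,1]$ auxiliary variable.
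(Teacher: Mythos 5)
Your proposal is correct and follows essentially the same route as the paper's proof: both realize any (near-)optimal policy for the garbled channels as an admissible randomized policy under the finer channels by internally simulating the garbled observation (you via the kernel $G^i$ and a realization argument, the paper directly via $u^i = \gamma^{i,*}(g^i(y^i_{\tilde{p}^i},\nu_i))$), and conclude the optimal cost under $(\tilde{p}^1,\dots,\tilde{p}^N)$ is no larger. Your explicit induction verifying that the joint law of $(\omega_0,u^1,\dots,u^N)$ propagates through the dynamic stages is a detail the paper leaves implicit, but the underlying argument is the same.
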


By garblings, we mean that each agent's measurements under $(p^1, \dots, p^N)$ are such that $y^i_{p^i} = g^i(y^i_{\Tilde{p}_i}, \nu_i)$ for some measurable $g^i$ and independent noise $\nu_i$, for all $i \in \mathcal{N}$, where $y^i_{\Tilde{p}_i}$ is the measurement under the original channel $\Tilde{p}_i$. 

\begin{proof}{\it of Lemma \ref{GarblingLemma}}
We observe that any outcome for a sequential stochastic problem achievable under $(p^1, \dots, p^N)$ is also achievable under $(\Tilde{p}_1, \dots, \Tilde{p}_N)$, since outcomes are determined by policies, which are measurable functions from measurements to actions, and the measurements under $(p^1, \dots, p^N)$ can be simulated using the measurements from $(\Tilde{p}_1, \dots, \Tilde{p}_N)$ and the independent noise variables $\nu_i$, and thus the resulting expected outcomes can also be achieved under $(\Tilde{p}_1, \dots, \Tilde{p}_N)$. For instance, if $\gamma^{1,*}, \dots, \gamma^{N,*}$ form a $\delta$-optimal policy (i.e. are within $\delta > 0$ of minimizing the expected cost) under channels $({p^1}, \dots, p^n)$, then this team policy under the channels $(\Tilde{p}^1, \dots, \Tilde{p}^N)$ can be simulated by selecting actions using the (randomized) policies $u^i = \gamma^{i,*}(g^i(y^i_{\Tilde{p}^i}, \nu_i))$, for all $i \in \mathcal{N}$, achieving the same expected cost. 
\end{proof}

We now recall Lusin's theorem, which will be used throughout the proof of the main theorems. 

\begin{theorem}\cite[Theorem 7.5.2]{Dud02}\label{LusinDudley}[Lusin's Theorem]
Let $(\mathbb{X}, T)$ be any topological space and $\mu$ a finite,
closed regular Borel measure on $\mathbb{X}$. Let $(\mathbb{S}, d)$ be a separable metric space and let $f$ be a Borel-measurable function from $\mathbb{X}$ into $\mathbb{S}$. Then for any $\epsilon > 0$ there is a closed set $F \subset \mathbb{X}$ such that $\mu(\mathbb{X}\setminus F) < \epsilon$ and the restriction of $f$ to $F$ is continuous.
\end{theorem}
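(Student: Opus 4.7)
The plan is to approximate the Borel measurable $f$ by a sequence of piecewise constant simple functions that are genuinely continuous on large closed subsets of $\mathbb{X}$, and then intersect those subsets to produce a single closed set $F$ on which $f$ is the uniform limit of continuous functions (hence continuous). The two essential ingredients are the separability of $(\mathbb{S}, d)$, which furnishes countable open covers of arbitrarily fine scale, and the closed regularity of the finite measure $\mu$, which lets us shrink Borel sets from within to closed sets at negligible $\mu$-cost.

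Concretely, I would first fix a countable dense set $\{s_n\}_{n \geq 1}$ in $\mathbb{S}$ and, for each scale $k \geq 1$, disjointify the cover $\{B(s_n, 1/k)\}_n$ of $\mathbb{S}$ by setting
\[
E_{n,k} = f^{-1}(B(s_n, 1/k)) \setminus \bigcup_{m < n} f^{-1}(B(s_m, 1/k)),
\]
so that on each Borel piece $E_{n,k}$ the value $f(x)$ lies within $1/k$ of $s_n$. Since $\mu(\mathbb{X}) < \infty$, pick $N_k$ with $\mu\bigl(\mathbb{X} \setminus \bigcup_{n \leq N_k} E_{n,k}\bigr) < \epsilon \, 2^{-(k+2)}$, and then by closed regularity choose closed sets $F_{n,k} \subseteq E_{n,k}$ with $\mu(E_{n,k}\setminus F_{n,k}) < \epsilon \, 2^{-(k+2)}/N_k$ for each $n \leq N_k$. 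Let $G_k = \bigcup_{n \leq N_k} F_{n,k}$, which is a finite disjoint union of closed sets, hence closed, and satisfies $\mu(\mathbb{X}\setminus G_k) < \epsilon \, 2^{-(k+1)}$.

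The decisive observation comes next: because the $F_{n,k}$ are closed and pairwise disjoint, each $F_{n,k}$ is clopen in the subspace topology of $G_k$, since its complement in $G_k$ is the finite union $\bigcup_{m \neq n,\, m \leq N_k} F_{m,k}$, which is closed. Thus the simple function $f_k \colon G_k \to \mathbb{S}$ defined by $f_k(x) = s_n$ for $x \in F_{n,k}$ is continuous on $G_k$ (preimages of open sets in $\mathbb{S}$ are finite unions of these clopen pieces), and by construction $d(f_k(x), f(x)) < 1/k$ for every $x \in G_k$. Setting $F = \bigcap_{k \geq 1} G_k$ yields a closed set with $\mu(\mathbb{X}\setminus F) \leq \sum_{k \geq 1} \epsilon \, 2^{-(k+1)} < \epsilon$, on which $f_k|_F \to f|_F$ uniformly; the standard fact that a uniform limit of continuous maps into a metric space is continuous then delivers continuity of $f|_F$.

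The step I expect to be the main obstacle is the passage from a Borel partition to a partition by closed sets in such a way that each approximating simple function is actually continuous on its domain: general Borel pieces need not be relatively clopen, so merely disjointifying preimages is not enough. Invoking closed regularity to shrink each $E_{n,k}$ to a closed $F_{n,k}$ and then exploiting that finite disjoint unions of closed sets endow each piece with the clopen property in the subspace topology is the key structural move, and is where the assumption that $\mathbb{X}$ is an arbitrary topological space (rather than, say, metrizable) is absorbed without pain. Once this continuity is secured, the remainder is a routine Borel--Cantelli-style summation of measure losses together with the standard uniform-convergence transfer of continuity.
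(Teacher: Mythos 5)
Your proof is correct and complete: the disjointified preimage partition at scale $1/k$, the shrink-to-closed-sets step via closed regularity, the observation that finitely many pairwise disjoint closed sets are each relatively clopen in their union (making the simple approximants continuous), and the uniform-limit conclusion on $F=\bigcap_k G_k$ all go through, with the measure bookkeeping adding up to less than $\epsilon$. The paper itself states this result without proof, citing Dudley, and your argument is essentially the classical one given in that reference, so there is nothing further to reconcile.
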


Finally we recall Tietze's extension theorem, which will be used in conjunction with Lusin's theorem in the proofs. This will be used to construct a continuous extension of the continuous function defined on $F$ in Theorem \ref{LusinDudley} to $\mathbb{X}$. Note that we consider a general space setup in the following; this is needed as we will study stochastic kernels as probability measure-valued maps.

\begin{theorem}\cite[Theorem 4.1]{dugundji}\label{tietze}[Dugundji]
Let $\mathbb{X}$ be an arbitrary metric space, $A$ a closed subset of $\mathbb{X}$, $L$ a locally convex linear space, and $f: A \rightarrow L$ a continuous map.
Then there exists a continuous function $f_C: \mathbb{X} \rightarrow L$ such that $f_C(a) = f(a) \: \forall a \in A$. Furthermore, the image of $f_C$ satisfies $f_C(\mathbb{X}) \subset$ $[$convex hull of $f(A)]$.
\end{theorem}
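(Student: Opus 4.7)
The plan is to construct $f_C$ explicitly on the open complement $U = \mathbb{X}\setminus A$ via a partition-of-unity indexed by carefully chosen points of $A$. Since $\mathbb{X}$ is metric, it is paracompact (Stone's theorem). Setting $r(x) = d(x,A)/2 > 0$ for each $x \in U$, the open cover $\{B(x,r(x)) : x \in U\}$ of $U$ admits a locally finite open refinement $\{V_\alpha\}_{\alpha \in I}$. For each $\alpha$, I would fix a witness $y_\alpha \in U$ with $V_\alpha \subset B(y_\alpha, r(y_\alpha))$, select an arbitrary $x_\alpha \in V_\alpha$, and then choose $a_\alpha \in A$ satisfying $d(x_\alpha, a_\alpha) < 2\, d(x_\alpha, A)$ (which is possible by definition of the distance to $A$).

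Next, subordinate a continuous partition of unity $\{\phi_\alpha\}$ to $\{V_\alpha\}$ and define
\[
f_C(x) = \begin{cases} f(x), & x \in A, \\ \sum_{\alpha} \phi_\alpha(x)\, f(a_\alpha), & x \in U. \end{cases}
\]
Local finiteness of $\{V_\alpha\}$ makes the sum finite in a neighborhood of every $x \in U$, so $f_C$ is continuous on $U$; it is trivially continuous at interior points of $A$. For $x \in U$, the value $f_C(x)$ is a finite convex combination of elements of $f(A)$ and hence lies in the convex hull of $f(A)$, yielding the image inclusion. Local convexity of $L$ is used here twice: to make convex combinations well-defined, and (crucially, in the next step) to ensure a neighborhood base of convex open sets at points of $L$.

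The main obstacle is continuity at boundary points of $A$. Fix $a \in A$ and consider $x \in U$ with $x \to a$. A triangle-inequality bookkeeping, tuned to the choices $r(x)=d(x,A)/2$ and $d(x_\alpha, a_\alpha) < 2\, d(x_\alpha, A)$, forces every $a_\alpha$ with $\phi_\alpha(x) \neq 0$ to satisfy $d(a_\alpha, a) \leq 9\, d(x, a)$. Concretely, if $x \in V_\alpha \subset B(y_\alpha, r(y_\alpha))$, then $d(y_\alpha, A) \leq d(y_\alpha, x) + d(x,a) < r(y_\alpha) + d(x,a)$ gives $r(y_\alpha) < d(x,a)$, after which $d(x_\alpha, y_\alpha) < r(y_\alpha)$, $d(x_\alpha, x) < 2\,d(x,a)$, $d(x_\alpha, A) < 3\,d(x,a)$, and $d(a_\alpha, x_\alpha) < 6\,d(x,a)$ follow in sequence, summing to $d(a_\alpha, a) < 9\, d(x,a)$. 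Given any open convex neighborhood $W$ of $f(a)$ in $L$, continuity of $f$ on $A$ places all such $f(a_\alpha)$ in $W$ once $x$ is sufficiently close to $a$; convexity of $W$ then yields $f_C(x) = \sum_\alpha \phi_\alpha(x) f(a_\alpha) \in W$, so $f_C(x) \to f(a)$.

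The hard part is precisely this boundary estimate: the entire construction hinges on coordinating the factor $1/2$ in the radius $r(x)$ with the factor $2$ in the slack for choosing $a_\alpha$, so that the chain of triangle inequalities closes into a linear bound in $d(x,a)$. Without this quantitative matching, the $a_\alpha$ selected by the partition of unity near $a$ could fail to cluster at $a$, and no continuity argument at the boundary would go through.
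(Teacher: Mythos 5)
Your proposal is correct: this is precisely the standard proof of Dugundji's extension theorem (the one in the cited reference), and the paper itself states the result as an imported theorem without reproving it, so there is no in-paper argument to compare against. The construction via Stone's paracompactness of metric spaces, a partition of unity subordinate to the refinement of $\{B(x, d(x,A)/2)\}$, and the chain of triangle inequalities yielding $d(a_\alpha, a) < 9\, d(x,a)$ (which, combined with local convexity of $L$, gives continuity at $\partial A$ and the convex-hull image bound) is sound as written.
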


\subsection{Weakly Continuous Measurement Kernels}

Now we present the main result of this section, which will be further generalized in the main result of our paper, Theorem \ref{mainThmNoWeak}, to be presented in the next section.

\begin{theorem}\label{weakVer}
Assume assumptions A1, A2, and A3 hold. For any $\epsilon > 0$, any sequential dynamic team problem is $\epsilon$-away in optimal cost from being static reducible.
\end{theorem}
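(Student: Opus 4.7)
The target estimate splits into two inequalities. The lower bound $J^*(P, p^1\tau^1_m, \dots, p^N\tau^N_m) \geq J^*(P, p^1, \dots, p^N)$ is immediate from Lemma \ref{GarblingLemma}: the garbled measurement $z^i = y^i + q^i_m$ is a garbling of $y^i$ by independent Gaussian noise, so the team-optimal cost cannot strictly decrease when one passes to the garbled information structure. The real work is the opposite direction, $\limsup_m J^*(P, p^1\tau^1_m, \dots, p^N\tau^N_m) \leq J^*(P, p^1, \dots, p^N)$.

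For the upper bound, I would fix $\delta > 0$, pick a $\delta$-optimal profile $\underline{\gamma}^* = (\gamma^{1,*}, \dots, \gamma^{N,*})$ for the original problem, and first regularize it. Applying Lusin's theorem (Theorem \ref{LusinDudley}) to each $\gamma^{i,*}$ with respect to the marginal law of $y^i$ induced by $\underline{\gamma}^*$ produces a closed set $F^i \subset \mathbb{Y}^i$ with $P(y^i \notin F^i) < \epsilon_i$ on which $\gamma^{i,*}$ restricts to a continuous map. Assumption A2 is used exactly so that Dugundji's extension theorem (Theorem \ref{tietze}) yields a continuous extension $\tilde{\gamma}^i : \mathbb{Y}^i \to \mathbb{U}^i$ whose image lies in the convex hull of $\gamma^{i,*}(F^i) \subset \mathbb{U}^i$. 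Boundedness of $c$ (A1) and a union bound over the exceptional sets then give $|J(\tilde{\underline{\gamma}}; p) - J(\underline{\gamma}^*; p)| \leq 2\|c\|_\infty \sum_i \epsilon_i$, hence $J(\tilde{\underline{\gamma}}; p) \leq J^* + \delta + 2\|c\|_\infty \sum_i \epsilon_i$.

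The central step is to show $\lim_{m \to \infty} J(\tilde{\underline{\gamma}}; p^1\tau^1_m, \dots, p^N\tau^N_m) = J(\tilde{\underline{\gamma}}; p^1, \dots, p^N)$, where the same continuous profile $\tilde{\underline{\gamma}}$ is applied on the garbled and ungarbled measurements respectively. I would prove this by induction on $n \leq N$: the joint law of $(\omega_0, z^1_m, u^1_m, \dots, z^n_m, u^n_m)$ under the garbled system converges weakly as $m \to \infty$ to the joint law of $(\omega_0, y^1, u^1, \dots, y^n, u^n)$ under the ungarbled system. The base case uses that $q^1_m$ converges weakly to $0$ together with continuity of $\tilde{\gamma}^1$. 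For the inductive step, A3 converts weak convergence of the conditioning variables into weak convergence of the kernel $p^n$, so $y^n_m \Rightarrow y^n$; then $z^n_m = y^n_m + q^n_m \Rightarrow y^n$, and continuity of $\tilde{\gamma}^n$ propagates the convergence to $u^n_m$. Since $c$ is bounded and continuous (A1), the expected costs converge by a generalized dominated convergence theorem (e.g., Serfozo's lemma) applied to $c$ integrated against the weakly convergent joint measures. Combining this with the preceding paragraph and sending $\delta$ and the $\epsilon_i$ to zero yields the desired $\limsup$ bound, which together with the garbling direction completes the proof.

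The main obstacle will be the careful propagation of weak convergence through the recursive structure of a dynamic team: the measurement of DM$n$ depends on the garbled measurements and actions of earlier DMs, both of which are moving with $m$. Assumption A3 is exactly the hypothesis that closes the kernel side of this recursion, while A2 (through Tietze/Dugundji) is what lets the Lusin-approximated policies be taken continuous so that the action side of the recursion also behaves well under weak convergence. A subsidiary technical point is that Lusin must be applied with respect to the marginal law of $y^i$ under the full joint distribution induced by $\underline{\gamma}^*$ itself, so that the $\epsilon_i$-error in policy approximation translates directly into an $O(\|c\|_\infty \epsilon_i)$-error in the achieved cost.
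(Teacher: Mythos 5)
Your proposal is correct and takes essentially the same route as the paper: Lemma \ref{GarblingLemma} for the lower bound, Lusin's theorem plus Dugundji's extension (with A2 guaranteeing the extension stays in $\mathbb{U}^i$) to continuify a $\delta$-optimal policy at cost $2\|c\|_\infty\sum_i\epsilon_i$, and a weak-convergence argument via generalized dominated convergence and A3 for the vanishing-noise limit. The only difference is organizational: the paper proves the limit by a backward recursion peeling off $\tau^N_m, p^N, \dots, \tau^1_m$ with continuously converging integrands, while you run a forward induction on weak convergence of the joint laws (which should be done conditionally on $\omega_0$, since A3 gives continuity only in the remaining arguments); both are valid.
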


\begin{proof}
\textbf{Step 1.}
Let $p^i(y^i \in \cdot|\omega_0, y^1, u^1, \dots, y^{i-1}, u^{i-1})$ denote each respective DM's measurement channel for $i \in \mathcal{N}$. Let $\tau^1_m, \dots, \tau^N_m$ be independent additive Gaussian noise channels with variance $1/m$ and mean $0$ for each DM. We will use $y^i$ to denote the measurement by agent $i$ through channel $p^i$, and $z^i$ to denote the corrupted measurement of $y^i$ through additive Gaussian noise channel $\tau^i_m$. 

Let $\bar{\gamma}^* = (\gamma^{1,*}, \dots, \gamma^{N,*})$ be a $\delta$-team-optimal policy (where $\delta >0$ is arbitrary) given cost function $c$ \emph{when the Gaussian noise channels are not used}. I.e. $\bar{\gamma}^*$ is a policy that achieves a cost of $J^*(P, p^1, \dots, p^N) + \delta$, or lower. Let $\mu$ denote the joint probability measure on $\Omega_0 \times \mathbb{Y}^1 \times \dots \mathbb{Y}^N$ when the DMs use channels $p^1, \dots, p^n$, and $\nu_m$ denote the measure on $(\Omega_0 \times \mathbb{Y}^1 \times \mathbb{R}^{k_1} \times \dots \mathbb{Y}^N \times \mathbb{R}^{k_N})$ when the DMs use channels $p^1\tau^1_m, \dots, p^N\tau^N_m$. 

We will now apply Lusin's theorem (Theorem \ref{LusinDudley}) for each decision maker to allow for the restriction of each DM's policy $\gamma^{i,*}$ to be continuous on a subset of $\mathbb{Y}^i$ of arbitrarily large marginal measure under policy $\bar{\gamma}^*$.

There is now the question of whether changing the policy affects the marginal on the measurements. This will be handled via a total probability argument: For each decision maker, we will fix all prior DM's policies as their personal policy from the $\delta$-team-optimal strategy $\bar{\gamma}^*$, while all future DMs' policies can be left as arbitrary measurable functions. We require all prior DMs' policies to be fixed in order to fix the measure on {\bf DM}$i$'s measurement space $\mathbb{Y}^i$, since {\bf DM}$i$'s measurement may depend on previous DMs'  actions (and thus policies). Future DMs' policies are irrelevant to the application of Lusin's theorem, as the theorem guarantees that the marginal on the subset of $\mathbb{Y}^i$ where $\gamma^{i,*}$ is not continuous is small, which is unaffected by future DMs' measures on their measurement spaces. 

For the first DM, by Lusin's theorem, there exists a set $A^1$ such that $\gamma^{1,*}$ is continuous on $(\mathbb{Y}^1 \setminus A^1)$ and
\begin{multline}
\int_{\Omega_0 \times A^1 \times \mathbb{Y}^2 \times \dots \times \mathbb{Y}^N}P(d\omega_0)p^1(dy^1|\omega_0)p^2(dy^2|\omega_0, y^1, f^1(y^1))\dots \\ p^N(dy^N|\omega_0, y^1, f^1(y^1), \dots, y^{N-1}, f^{N-1}(y^{N-1})) < \epsilon_1, \nonumber   
\end{multline}
for any measurable $f^1, \dots, f^{N-1}$. 

For the second DM, for $\epsilon_2 > 0$, given fixed policy $\gamma^{1,*}$ for DM 1, there exists a set $A^2$ such that $\gamma^{2,*}$ is continuous on $(\mathbb{Y}^2 \setminus A^2)$ and
\begin{multline}
\int_{\Omega_0 \times \mathbb{Y}^1 \times A^2 \times \mathbb{Y}^3 \times \dots \times \mathbb{Y}^N}P(d\omega_0)p^1(dy^1|\omega_0)p^2(dy^2|\omega_0, y^1, \gamma^{1,*}(y^1))\dots \\ p^N(dy^N|\omega_0, y^1, \gamma^{1,*}(y^1), y^2, f^2(y^2), \dots, y^{N-1}, f^{N-1}(y^{N-1})) < \epsilon_2, \nonumber
\end{multline}
for any measurable $f^2, \dots, f^{N-1}$. 

Continuing in this way, for the $N$th DM, for $\epsilon_N > 0$, given fixed $\gamma^{1,*}, \dots, \gamma^{N-1, *}$, there exists a set $A^N$ such that $\gamma^{N,*}$ is continuous on $(\mathbb{Y}^N \setminus A^N)$ and
\begin{multline}
 \int_{\Omega_0 \times \mathbb{Y}^1 \times \dots \times \mathbb{Y}^{N-1} \times A^N}P(d\omega_0)p^1(dy^1|\omega_0)p^2(dy^2|\omega_0, y^1, \gamma^{1,*}(y^1))\dots \\ p^N(dy^N|\omega_0, y^1, \gamma^{1,*}(y^1), \dots, y^{N-1}, \gamma^{N-1, *}(y^{N-1})) < \epsilon_N. \nonumber
\end{multline}

Furthermore, following Tietze's extension theorem, Theorem \ref{tietze}, it follows that there exist continuous policies $\gamma^1_C, \dots, \gamma^N_C$, such that $\gamma^i_C(x) = \gamma^{i,*}(x), \: \forall x \in (\mathbb{Y}^i \setminus A^i)$ for each $i \in \{1,\cdots, N\}$. \\

\textbf{Step 2.}
Define the following terms:

$h_i := (\omega_0, y^1, \gamma^{1,*}(y^1), \dots, y^{i-1}, \gamma^{i-1, *}(y^{i-1})),$ where $h_1 = (\omega_0).$

$r_i := (\omega_0, z^1, \gamma^{1,*}(z^1), \dots, z^{i-1}, \gamma^{i-1,*}(z^{i-1})),$ where $r_1 = (\omega_0).$

We now note the following, for any fixed $y^1, \omega_0$:
\begin{align*}
   &\lim_{m \rightarrow \infty} \int \tau^1_m(dz^1|y^1)p^2(dy^2|r_2)\dots p^N(dy^N|r_N)\tau^N_m(dz^N|y^N) c(\omega_0, \gamma^1_C(z^1), \dots,\gamma^N_C(z^N)) \\
   & = \lim_{m \rightarrow \infty} \int \tau^1_m(dz^1|y^1)p^2(dy^2|r_2)\dots p^N(dy^N|r_N)\\ &\qquad \qquad \qquad \qquad \qquad \qquad \times \int \tau^N_m(dz^N|y^N) c(\omega_0, \gamma^1_C(z^1), \dots,\gamma^N_C(z^N))
\end{align*}

Letting $G^{N}_{m}(y^N, z^{N-1}, \dots, z^1) := \int \tau^N_m(dz^N|y^N) c(\omega_0, \gamma^1_C(z^1), \dots,\gamma^N_C(z^N))$ (recalling that $\omega_0$ is fixed), we can observe that $G^{N}_{m}$ is a sequence of real-valued and bounded functions, and that if $(y^N_m, z^{N-1}_m, \dots, z^1_m) \rightarrow (y^N, z^{N-1},\dots, z^1)$, then \[G^{N}_{m}(y^N_m, z^{N-1}_m, \dots, z^{1}_m) \rightarrow c(\omega_0, \gamma^{1}_C(z^1),\dots,\gamma^{N-1}_C(z^{N-1}), \gamma^{N}_C(y^N)),\] i.e. $G^{N}_{m}$ continuously converges (as it is defined in \cite{serfozo1982convergence}) to \[G^N(y^N, z^{N-1}, \dots, z^1) := c(\omega_0, \gamma^{1}_C(z^1), \dots,\gamma^{N-1}_C(z^{N-1}), \gamma^{N}_C(y^N)).\] 

The important aspect is that the measurement $z^N$ is now replaced by $y^N$ in the cost function. This follows from the fact that $\tau^N_m(\cdot|y^N)$ converges weakly to the dirac-delta measure at $y^N$ (see e.g. \cite[Example 12.21]{HunterAppliedAnalysis} for a related argument) and that the DMs' policies are continuous and the cost function is continuous and bounded. Continuing, we have:
\begin{align*}
   &\lim_{m \rightarrow \infty} \int \tau^1_m(dz^1|y^1)p^2(dy^2|r_2)\dots p^N(dy^N|r_N)\tau^N_m(dz^N|y^N) c(\omega_0, \gamma^{1}_C(z^1), \dots,\gamma^{N}_C(z^N)) \\
   & = \lim_{m \rightarrow \infty} \int \tau^1_m(dz^1|y^1)p^2(dy^2|r_2)\dots \tau^{N-1}_m(dz^{N-1}|y^{N-1}) \\
   & \qquad \qquad \qquad \qquad \qquad \qquad \times \int p^N(dy^N|r_N)G^N_{m}(y^N, z^{N-1}, \dots, z^1).
\end{align*}

Let $H^{N}_{m}(z^{N-1}, \dots, z^{1}) :=\int p^N(dy^N|r_N)G^{N}_{m}(y^N, z^{N-1}, \dots, z^1)$. We can observe that, since $G^{N}_{m}$ is a sequence of real-valued and bounded functions, $H^{N}_{m}$ is also such a sequence. Furthermore, if $(z^{N-1}_m, \dots, z^1_m) \rightarrow (z^{N-1}, \dots,z^1)$, then $H^{N}_{m}(z^{N-1}_m, \dots, z^{1}_m)$ continuously converges to $H^N(z^{N-1}, \dots,z^1) := \int p^N(dy^N|r_N)G^{N}(y^N, z^{N-1}, \dots, z^1)$ by the weak continuity assumption on the channel $p^N$ and the continuity of $c$ (and thus $G^N_m$).

Continuing in this way, for decreasing $i$, we iteratively define $G^{i}_m$ for $i \in \{1, \dots, N-1\}$ as 
\[G^{i}_m(y^{i}, z^{i-1}, \dots, z^1) := \int \tau^{i}_m(dz^{i}|y^{i}) H^{i-1}_m(z^{i-1}, \dots, z^1)\]
and $H^{i}_m$ for $i \in \{1, \dots, N-1\}$ as
\[H^{i}_m(z^{i-1}, \dots, z^1) := \int p^{i}(dy^{i}|r_{i})G^i_m(y^i_m, z^{i}_m, \dots, z^1_m).\]

We note that each of these functions are real-valued and bounded, and will continuously converge. In particular,
\begin{align*}&G^{i}_m(y^{i}, z^{i-1}, \dots, z^1) \\ &\rightarrow \int \left(\prod_{j = i}^{N-1} p^{j}(dy^j|h_j)\right)c(\omega_0,\gamma^{1}_C(z^1), \dots, \gamma^{i-1}_C(z^{i-1}), \gamma^{i}_C(y^i), \dots, \gamma^{N}_C(y^N)),\end{align*}
and each $H^i_m$ will converge to $\int p^i(dy^i|r_i)G^i(y^{i}, z^{i-1}, \dots, z^1)$.

Thus, eventually we arrive at:
\begin{align*}
   &\lim_{m \rightarrow \infty} \int \tau^1_m(dz^1|y^1)p^2(dy^2|r_2)\dots p^N(dy^N|r_N)\tau^N_m(dz^N|y^N) c(\omega_0, \gamma^{1}_C(z^1), \dots,\gamma^{N}_C(z^N)) \\
   & = \lim_{m \rightarrow \infty} \int \tau^1_m(dz^1|y^1)H^{2}_m(z^1),
\end{align*}
where $H^2_m$ is a continuously converging sequence of continuous and bounded real-valued functions.

Thus, since $\tau^1_m(\cdot|y^1)$ converges weakly to the dirac-delta measure at $y^1$, applying the Generalized Dominated Convergence Theorem (\cite[Theorem 3.5]{serfozo1982convergence} or \cite[Theorem 3.5]{Lan81}), we have that:
\begin{align*}
   &  \lim_{m \rightarrow \infty} \int \tau^1_m(dz^1|y^1) H^{2}_m(z^1)\\
   & = H^{2}(y^1) \\
   & = \int p^2(dy^2|h_2)p^3(dy^3|h_3)\dots p^N(dy^N|h_N) c(x, \gamma^{1}_C(y^1),\dots, \gamma^{N}_C(y^N)).
\end{align*}


\textbf{Step 3.}
Now, we can complete the proof:
\begin{align*}
&\lim_{m\rightarrow \infty}E^{P, p^1, \dots, p^N}_{\tau^1_N,\dots, \tau^N_N}[c(\omega_0, \gamma^1_C, \dots, \gamma^N_C)] \\
&= \lim_{m \rightarrow \infty}\int c(\omega_0, \gamma^1_C(z^1),\dots, \gamma^N_C(z^N)) \\
& \qquad \qquad \qquad \qquad \times P(d\omega_0)p^1(dy^1|r_1)\tau^1_m(dz^1|y^1)\dots p^N(dy^N|r_N)\tau^N_m(dz^N|y^N) \\
& = \lim_{m \rightarrow \infty} \int P(d\omega_0)p^1(dy^1|\omega_0)\int c(\omega_0, \gamma^1_C(z^1), \dots, \gamma^N_C(z^N)) \tau^1_m(dz^1|y^1) \\
& \qquad \qquad \qquad \qquad \times p^2(dy^2|r_2)\tau^2_m(dz^2|y^2)\dots p^N(dy^N|r_N)\tau^N_m(dz^N|y^N)  \\
& = \int c(\omega_0, \gamma^1_C(y^1), \dots, \gamma^N_C(y^N))P(d\omega_0)p^1(dy^1|h_1)p^2(dy^2|h_2)\dots p^N(dy^N|h_N),
\end{align*}
where the final equality holds by the convergence result from Step 2. Furthermore, we have that:
\begin{align*}
\mu(\Omega_0 \times (\mathbb{Y}^1\setminus A_1) \times \dots \times (\mathbb{Y}^N \setminus A_N)) &\geq 1 - \sum_{i = 1}^N \mu_{\mathbb{Y}^i}(A_i) \\
&= 1 - \sum_{i=1}^N \epsilon_i.
\end{align*}

Letting $M:=\|c\|_{\infty} \in \mathbb{R}_{+}$, we have that: 
\begin{align*}
&\int_{\Omega_0 \times \mathbb{Y}^1 \times \dots \times \mathbb{Y}^N} c(\omega_0, \gamma^1_C(y^1), \dots, \gamma^N_C(y^N))P(d\omega_0)p^1(dy^1|h_1)p^2(dy^2|h_2)\dots p^N(dy^N|h_N)\\
&\leq \bigg(\int_{\Omega_0 \times (\mathbb{Y}^1\setminus A_1) \times \dots \times (\mathbb{Y}^N\setminus A_N)} c(\omega_0, \gamma^{1,*}(y^1), \dots, \gamma^{n,*}(y^N)) \\
& \qquad \qquad \qquad \qquad \times P(d\omega_0)p^1(dy^1|h_1)p^2(dy^2|h_2)\dots p^N(dy^N|
h_N)\bigg) + M\sum_{i=1}^{N}\epsilon_i \\
&\leq \bigg(\int_{\Omega_0 \times \mathbb{Y}^1 \times \dots \times \mathbb{Y}^N} c(\omega_0, \gamma^{1,*}(y^1), \dots, \gamma^{n,*}(y^N)) \\
& \qquad \qquad \qquad \qquad \times P(d\omega_0)p^1(dy^1|h_1)p^2(dy^2|h_2)\dots p^N(dy^N|h_N)\bigg) + 2M\sum_{i=1}^{N}\epsilon_i\\
& \leq J^*(P, p^1, \dots, p^N) + 2M\sum_{i=1}^{N}\epsilon_i + \delta. 
\end{align*}

The above holds since on $(\mathbb{Y}^i\setminus A_i)$, $\gamma^i_C$ is equivalent to $\gamma^{i,*}$. 

Since each $\epsilon_i > 0$ and $\delta$ are arbitrary, we have that:
\[\lim_{m\rightarrow \infty}E^{P, p^1, \dots, p^N}_{\tau^1_m,\dots, \tau^N_m}[c(\omega_0, \gamma^1_C, \dots, \gamma^N_C)]\leq J^*(P,p^1,\dots, p^N).\]

Since we know that, for any $m$, $J^*(P,p^1\tau^1_m, \dots, p^N\tau^N_m) \leq E^{P, p^1\tau^1_m,\dots,p^N\tau^N_m}[c(x, \gamma^1_C, \dots, \gamma^N_C)]$ by definition (since $(\gamma^1_C, \dots, \gamma^N_C)$ are not necessarily optimal policies for these measurement channels), it follows that:

$\lim_{m\rightarrow \infty}J^*(P,p^1\tau^1_m, \dots, p^N\tau^N_m) \leq J^*(P,p^1,\dots, p^N)$. 

Applying Lemma \ref{GarblingLemma}, we know that for every $m \in \mathbb{Z}$, 
\[J^*(P,p^1\tau^1_m, \dots,p^N\tau^N_m) \geq J^*(P,p^1, \dots,p^N).\]

Thus we get $\lim_{m \rightarrow \infty}J^*(P,p^1\tau^1_m, \dots,p^N\tau^N_m) = J^*(P,p^1, \dots,p^N)$. This completes the proof. 
\end{proof}

\subsection{Sequential Stochastic Control Problems are Nearly Static Reducible}

In this section, we show that the weak continuity assumption on the channels (A3) can be omitted using an extra approximation step. This is the main result of the paper.

\begin{theorem}\label{mainThmNoWeak}
Assume Assumptions A1 and A2 hold. For any $\epsilon > 0$, any sequential dynamic team problem is $\epsilon$-away in optimal cost from being static reducible.
\end{theorem}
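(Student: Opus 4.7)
The plan is to reduce Theorem~\ref{mainThmNoWeak} to Theorem~\ref{weakVer} by invoking Lusin's theorem a second time, now at the level of the measurement kernels: each $p^i$ is to be approximated by a weakly continuous kernel $\tilde{p}^i$ that agrees with it on a set of large measure, so that the perturbed team $(\tilde{p}^1,\dots,\tilde{p}^N)$ satisfies A3 and Theorem~\ref{weakVer} applies. A ``good set'' comparison will then bridge the perturbed-team cost back to the original-team cost.

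First, exactly as in Theorem~\ref{weakVer}, I would fix a $\delta$-optimal profile $\bar{\gamma}^*$ and use Lusin (Theorem~\ref{LusinDudley}) plus Tietze (Theorem~\ref{tietze}, applicable thanks to A2) to extract continuous policies $\gamma^i_C$ coinciding with $\gamma^{i,*}$ on sets $\mathbb{Y}^i\setminus A^i$ of marginal measure at least $1-\epsilon_i$. Second, for each $i$, I would regard the kernel $h_i \mapsto p^i(\cdot \mid h_i)$ as a Borel map from the conditioning space $\Omega_0 \times \prod_{j<i}(\mathbb{Y}^j\times \mathbb{U}^j)$ into the separable metric space $\mathcal{P}(\mathbb{Y}^i)$ of probability measures under the weak topology. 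Letting $\nu^{(i-1)}$ denote the marginal of $h_i$ under the measure induced by the original channels and the continuous policies, Lusin's theorem yields a closed set $F^i$ with $\nu^{(i-1)}(F^{i,c})\leq\beta_i$ on which $p^i$ is weakly continuous. Since $\mathcal{P}(\mathbb{Y}^i)$ is a convex subset of the locally convex space of finite signed Borel measures, Tietze's theorem extends this restriction to a weakly continuous kernel $\tilde{p}^i$ defined on the whole conditioning space and agreeing with $p^i$ on $F^i$. Third, I would apply Theorem~\ref{weakVer} to the auxiliary team $(\tilde{p}^1,\dots,\tilde{p}^N)$, obtaining $\lim_{m}J^*(P,\tilde{p}^1\tau^1_m,\dots,\tilde{p}^N\tau^N_m)=J^*(P,\tilde{p}^1,\dots,\tilde{p}^N)$.

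The bridge back to the original team uses the ``good set'' $G=\{h_i\in F^i \text{ for all } i\}$. Under both channel choices, with the continuous policies fixed, the joint laws coincide on $G$, and a step-by-step union bound using $\nu^{(i-1)}(F^{i,c})\leq \beta_i$ shows that the (ungarbled) measure of $G^c$ is at most $\sum_i\beta_i$. With $M=\|c\|_\infty$, this gives
\[
\bigl|E^{P,p^1,\dots,p^N}[c(\omega_0,\gamma^1_C,\dots,\gamma^N_C)] - E^{P,\tilde{p}^1,\dots,\tilde{p}^N}[c(\omega_0,\gamma^1_C,\dots,\gamma^N_C)]\bigr|\leq 2M\sum_i\beta_i,
\]
and combining this with the chain of inequalities from Step~3 of Theorem~\ref{weakVer}'s proof applied to the $\tilde{p}$-team yields $\lim_m J^*(P,\tilde{p}^1\tau^1_m,\dots,\tilde{p}^N\tau^N_m) \leq J^*(P,p^1,\dots,p^N)+\delta+2M\sum_i \epsilon_i+2M\sum_i\beta_i$.

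The main obstacle is promoting this into a statement about the \emph{original} garbled team $(p^1\tau^1_m,\dots,p^N\tau^N_m)$, namely controlling $|E^{P,p^1\tau^1_m,\dots}[c]-E^{P,\tilde{p}^1\tau^1_m,\dots}[c]|$, at least asymptotically in $m$, by a quantity proportional to $\sum_i\beta_i$. The subtlety is that the garbled analogue of $h_i$ involves corrupted measurements $z^j=y^j+q^j_m$, so the relevant marginals depend on $m$ and differ from $\nu^{(i-1)}$; the bound $\nu^{(i-1)}(F^{i,c})\leq \beta_i$ therefore does not immediately transfer. My plan here is to process the comparison along a hybrid chain swapping $p^k$ for $\tilde{p}^k$ one kernel at a time, exploiting the weak continuity of the previously-swapped $\tilde{p}^j$'s together with the Generalized Dominated Convergence Theorem used in Step~2 of Theorem~\ref{weakVer} (applied to continuous approximations from above of $\mathbf{1}_{F^i}$) to pass the $m\to\infty$ limit through the nested integrals. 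Once such an estimate is in hand, letting $\delta,\epsilon_i,\beta_i\downarrow 0$ and combining with Lemma~\ref{GarblingLemma}'s reverse inequality delivers $\lim_m J^*(P,p^1\tau^1_m,\dots,p^N\tau^N_m)=J^*(P,p^1,\dots,p^N)$, the desired $\epsilon$-nearness to static reducibility.
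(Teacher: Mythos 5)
Your proposal follows essentially the same route as the paper's own proof: a second application of Lusin's theorem together with Tietze's extension to the kernels viewed as Borel maps into $\mathcal{P}(\mathbb{Y}^i)$ (a convex subset of the locally convex space of signed measures), a good-set/bad-set decomposition contributing an error of order $M\sum_i\beta_i$, the generalized dominated convergence argument of Theorem~\ref{weakVer} on the good set, and Lemma~\ref{GarblingLemma} for the reverse inequality. The one subtlety you flag --- that the garbled marginals depend on $m$, so the Lusin bound on the bad set must be shown to transfer uniformly in $m$ --- is a genuine point that the paper's Step~2 asserts rather than elaborates, so your hybrid-chain plan fills in a detail the paper leaves implicit rather than diverging from its approach.
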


We note that the theorem proof is also constructive: the solution and the cost obtained for the perturbed system is realizable (under a randomized policy) for the original model.

{\bf Proof program.} The proof is essentially the same as the one for the Theorem \ref{weakVer}, except from the addition of a new step, Step 2, to be presented below. 

Figure \ref{fig} displays the general proof program. Here $J^{\bar{\gamma}}(\cdot)$ denotes the expected cost when the policies are given specified by $\bar{\gamma}$. (1) holds because a continuous collection $\gamma^1_C, \dots, \gamma^N_C$ can not achieve a cost better than the optimal cost under channels $p^1\tau^1_m, \dots, p^N\tau^N_m$, for any $m$. (2) holds by applying Step 2 below, where $\beta > 0$ is the (arbitrarily small) error term caused by approximating the channels with weakly continuous kernels. (3) holds by Step 1 below, because $\gamma^1_C, \dots, \gamma^N_C$ approximate the $\delta$-optimal policies $\gamma^{1,*}, \dots, \gamma^{N,*}$, except for on a set of small measure, resulting in an arbitrarily small error term $\epsilon > 0$. After being defined, $\gamma^1_C, \dots, \gamma^N_C$ are fixed throughout the proof, and in Step 2, the application of Lusin's theorem to approximate the DM channels uses these fixed continuous policies, as they affect the measures on the DM action spaces. The terms in (4) are related by Lemma \ref{GarblingLemma}, with the relationship being $J^*(P,p^1\tau^1_m, \dots,p^N\tau^N_m) \geq J^*(P,p^1, \dots,p^N)$. \\

\begin{figure}[h]
\begin{center}
\includegraphics[scale = 0.80]{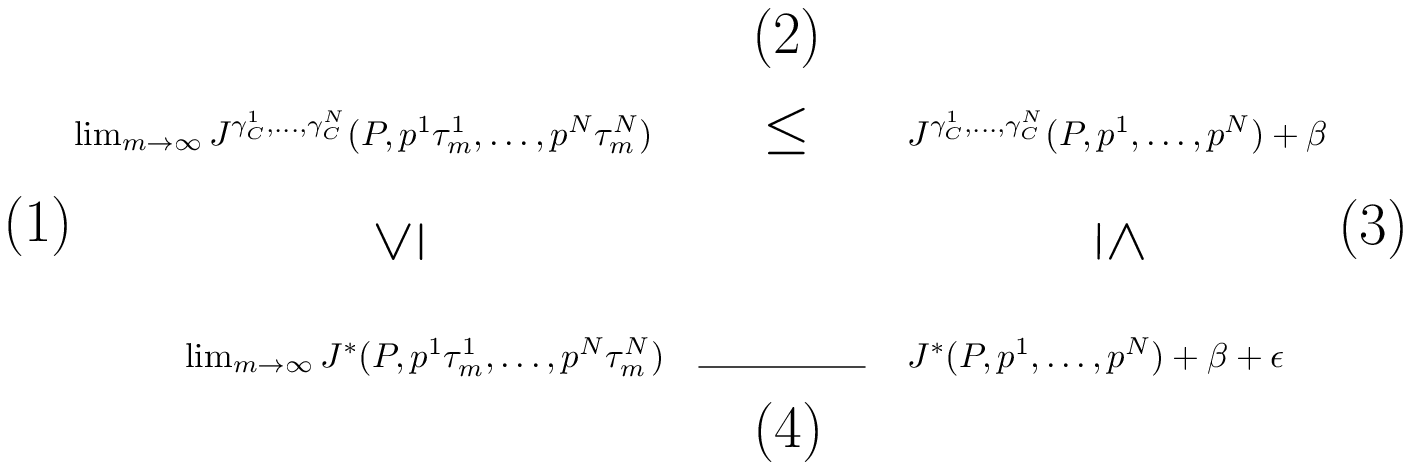} 
\caption{{Proof Program: Arguments of the proof of Theorem \ref{mainThmNoWeak}} \label{fig}}
\end{center}
\end{figure}

We now present the details of the proof.

\begin{proof}{\it (of Theorem \ref{mainThmNoWeak})}

\textbf{Step 1.}
Exactly as in the proof of Theorem \ref{weakVer}, we use Lusin's theorem to construct sets $(\mathbb{Y}^1 \setminus A^1), \dots, (\mathbb{Y}^N \setminus A^N)$ on which $\gamma^{1,*}, \dots, \gamma^{N,*}$ are respectively continuous, and we use Tietze's extension theorem to approximate $\bar{\gamma}^*$ with $\gamma^1_C, \dots, \gamma^N_C$. We will fix this continuous collection of functions in the following.

\textbf{Step 2.}
We now apply Lusin's theorem again, however this time we apply it to each agent's channel to approximate each channel with a weakly continuous kernel. 

Note that a stochastic kernel (as a regular conditional probability) is defined with the following property: for every $(\omega_0, y^1, u^1, \dots, y^{i-1}, u^{i-1})$, $p^i(y^i \in \cdot|\omega_0, y^1, u^1, \dots, y^{i-1}, u^{i-1})$ is $\mathcal{P}(\mathbb{Y}^i)$-valued and for every Borel set $B \subset \mathbb{Y}^i$, 
$p^i(y^i \in B| \cdot): \omega_0, y^1, u^1, \dots, y^{i-1}, u^{i-1} \to \mathbb{R}$ is Borel-measurable.

Following \cite[Proposition 7.26]{BeSh78}, the above property is equivalent to the the kernel $p^i(y^i \in \cdot|\omega_0, y^1, u^1, \dots, y^{i-1}, u^{i-1})$ being a Borel-measurable map from $\Omega_0 \times \mathbb{Y}^1 \times \mathbb{U}^1 \times \dots \times \mathbb{Y}^{i-1} \times \mathbb{U}^{i-1}$ to $\mathcal{P}(\mathbb{Y}^i)$ (which is endowed by the weak convergence topology). 

Because $\mathbb{Y}^i$ is standard Borel, $\mathcal{P}(\mathbb{Y}^i)$ is a separable metric space, and can be defined by viewing the space of probability measures $\mathcal{P}(\mathbb{Y}^i)$ as a convex subset of a locally convex space \cite[Chapter 3]{rudin1991functional} of signed measures defined on ${\cal B}(\mathbb{Y}^i)$, where we define the locally convex space of signed measures with the following notion of convergence: We say that $\nu_n \rightarrow \nu$ if $\int f(y^i) \nu_n(dy^i) \rightarrow \int f(y^i) \nu(dy^i)$ for every continuous and bounded function $f: \mathbb{Y}^i \to \mathbb{R}$. 

Thus, Theorems \ref{LusinDudley} and \ref{tietze} apply with the continuous extension being probability measure-valued (by Theorem \ref{tietze}). 

\sloppy Therefore for any $\beta_i > 0$, for fixed $\gamma^1_C, \dots, \gamma^{i-1}_C$, and $p^1, \dots, p^{i-1}$, there exists a channel $p^i_C$ that satisfies A3, in the sense that if $(\omega_m, y^1_m, u^1_m, \dots, y^{i-1}_m, u^{i-1}_m) \rightarrow (\omega_0, y^1, u^1, \dots, y^{i-1}, u^{i-1})$, then $p^i_C(y^i \in \cdot |\omega_m, y^1_m, u^1_m, \dots, y^{i-1}_m, u^{i-1}_m) \rightarrow p^i_C(y^i \in \cdot|\omega_0, y^1, u^1, \dots, y^{i-1}, u^{i-1})$ weakly. We emphasize here that the DM policies are fixed as the continuous policies developed in Step 1; these policies, in conjunction with fixing the previous DMs' channels, determine the joint probability distribution on $\Omega_0 \times \mathbb{Y}^1 \times \mathbb{U}^1 \times \dots \times \mathbb{Y}^{i-1} \times \mathbb{U}^{i-1}$. Furthermore, $p^i_C(\cdot | \omega_0, y^1, u^1, \dots, y^{i-1}, u^{i-1}) = p^i(\cdot | \omega_0, y^1, u^1, \dots, y^{i-1}, u^{i-1})$ on a set of measure $1 - \beta_i$ on $\Omega_0 \times \mathbb{Y}^1 \times \mathbb{U}^1 \times \dots \times \mathbb{Y}^{i-1} \times \mathbb{U}^{i-1}$. Let $D^i$ denote this set of measure $1 - \beta_i$, and use $D^i_j$ to denote the projection of the set $D^i$ on its $j$th component.

Let $E:= (\cap_{j = 1}^N D^j_1 \times \cap_{j = 2}^N D^j_2 \times \mathbb{R}^{k_1} \times \cap_{j = 2}^N D^j_3 \times \dots \cap_{j = N-1}^N D^{j}_{2N-1} \times D^{N}_{2N} \times \mathbb{R}^{k_N} \times D^N_{2N+1})$. I.e., $E \subset (\Omega_0 \times \mathbb{Y}^1 \times \mathbb{R}^{k_1} \times \mathbb{U}^1 \times \dots \mathbb{Y}^N \times \mathbb{R}^{k_N} \times \mathbb{U}^N)$, where $p^i = p^i_C$ on $E$, for every $i \in \mathcal{N}$, and $E$ is constructed using the intersections of the spaces of arbitrarily large measure on which the channels are weakly continuous. Let $E^c$ denote the complement of $E$ in $(\Omega_0 \times \mathbb{Y}^1 \times \mathbb{R}^{k_1} \times \mathbb{U}^1 \times \dots \mathbb{Y}^N \times \mathbb{R}^{k_N} \times \mathbb{U}^N)$. Let $M:= \|c\|_{\infty} \in \mathbb{R}_+$. Now, applying the fact that the marginal measure on each $D^i$ is known, we note that, given fixed $\gamma^1_C, \dots, \gamma^N_C$, for any $m \in \mathbb{Z}_{\geq 1}$:
\begin{align*}
    &\int_{E^c} c(\omega_0, u^1,\dots, u^N)P(d\omega_0)p^1(dy^1|r_1)\tau^1_m(dz^1|y^1)\gamma^1_C(du^1|z^1)\dots \\
   & \qquad \qquad \qquad \qquad \qquad \qquad \qquad  p^N(dy^N|r_N)\tau^N_m(dz^N|y^N)\gamma^N_C(du^N|z^N) \\
    & \leq M \int_{E^c} P(d\omega_0)p^1(dy^1|r_1)\tau^1_m(dz^1|y^1)\gamma^1_C(du^1|z^1)\dots  \\ & \qquad \qquad \qquad \qquad \qquad \qquad \qquad  p^N(dy^N|r_N)\tau^N_m(dz^N|y^N)\gamma^N_C(du^N|z^N) \\
    & \leq M \sum_{i = 1}^N \beta_i. 
\end{align*}

\textbf{Step 3.}
Assume that $p^1, \dots, p^N$ satisfy A3; this step follows identically to Step 2 from Theorem \ref{weakVer}. Once again, the conclusion is that, for any $\omega_0$ and $y^1$:
\begin{align*}
   &\lim_{m \rightarrow \infty} \int \tau^1_m(dz^1|y^1)p^2(dy^2|r_2)\tau^2_m(dz^2|y^2)\dots  \\
   & \qquad \qquad \qquad \qquad \qquad \qquad \qquad  p^N(dy^N|r_N)\tau^N_m(dz^N|y^N) c(\omega_0, \gamma^{1}_C(z^1), \dots,\gamma^{N}_C(z^N)) \\
   & = \int p^2(dy^2|h_2)p^3(dy^3|h_3)\dots p^N(dy^N|h_N) c(x, \gamma^{1}_C(y^1),\dots, \gamma^{N}_C(y^N)).
\end{align*}

\textbf{Step 4.}
Now, we can show the main result. We have that:
\begin{align*}
&\lim_{m\rightarrow \infty}E^{P, p^1\tau^1_m, \dots, p^N\tau^N_m}[c(\omega_0, \gamma^{1}_C, \dots, \gamma^{N}_C)] \\
&= \lim_{m \rightarrow \infty}\int c(\omega_0, \gamma^{1}_C(z^1),\dots, \gamma^{N}_C(z^N)) \\
   & \qquad \qquad \qquad \times P(d\omega_0)p^1(dy^1|r_1)\tau^1_m(dz^1|y^1)\dots p^N(dy^N|r_N)\tau^N_m(dz^N|y^N) \\
&= \lim_{m \rightarrow \infty}\int c(\omega_0, u^1,\dots, u^N) \\
   & \quad \times P(d\omega_0)p^1(dy^1|r_1)\tau^1_m(dz^1|y^1)\gamma^1_C(du^1|z^1)\dots p^N(dy^N|r_N)\tau^N_m(dz^N|y^N)\gamma^N_C(du^N|z^N) \\
&\leq \bigg(\lim_{m \rightarrow \infty} \int_{E} c(\omega_0, u^1,\dots, u^N)P(d\omega_0)p^1(dy^1|r_1)\tau^1_m(dz^1|y^1)\gamma^1_C(du^1|z^1)\dots \\
& \qquad \qquad \qquad \qquad \qquad \qquad   p^N(dy^N|r_N)\tau^N_m(dz^N|y^N)\gamma^N_C(du^N|z^N)\bigg) + M\sum_{i=1}^N\beta_i\\
&= \bigg(\int_{E}c(\omega_0, u^1,\dots, u^N) P(d\omega_0)p^1(dy^1|h_1)\gamma^1_C(du^1|y^1)p^2(dy^2|h_2)\dots\\
& \qquad \qquad \qquad \qquad \qquad \qquad \qquad \qquad  p^N(dy^N|h_N)\gamma^N_C(du^N|y^N)\bigg) + M\sum_{i=1}^N\beta_i\\
&\leq \bigg(\int c(\omega_0, u^1,\dots, u^N) P(d\omega_0)p^1(dy^1|h_1)\gamma^1_C(du^1|y^1)p^2(dy^2|h_2)\dots\\
& \qquad \qquad \qquad \qquad \qquad \qquad \qquad \qquad  p^N(dy^N|h_N)\gamma^N_C(du^N|y^N)\bigg) + 2M\sum_{i=1}^N\beta_i\\
& =\int c(\omega_0, \gamma^{1}_C(y^1), \dots, \gamma^{N}_C(y^N))P(d\omega_0)p^1(dy^1|h_1)p^2(dy^2|h_2)\dots p^N(dy^N|h_N) \\ & \qquad \qquad \qquad \qquad \qquad \qquad \qquad \qquad \qquad + 2M\sum_{i=1}^N\beta_i\\
&\leq \int c(\omega_0, \gamma^{1,*}(y^1), \dots, \gamma^{N,*}(y^N))P(d\omega_0)p^1(dy^1|h_1)p^2(dy^2|h_2)\dots p^N(dy^N|h_N) \\ &\qquad \qquad \qquad \qquad \qquad \qquad \qquad \qquad  + 2M\sum_{i=1}^{N}\epsilon_i  +   2M\sum_{i=1}^N\beta_i\\
&= J^*(P, p^1, \dots, p^N) + \delta +2M(\sum_{i=1}^{N}\epsilon_i +   \sum_{i=1}^N\beta_i).
\end{align*}

The first several steps follow using Step 2, applying the fact that, if $(z^1, u^1, y^2, z^2, u^2, \dots, y^N, z^N, u^N) \in E$, then, given $\gamma^1_C, \dots, \gamma^N_C$, each $p^i = p^i_C$ on $E$, and each $p^i_C$ satisfies A3. Furthermore, the measure of the complement of $E$ is uniformly small for every $m$. We also use a similar argument as to Theorem \ref{weakVer} to swap the continuous policies with $\bar{\gamma}^*$ for the final inequality. 

Since each $\epsilon_i > 0$, $\beta_i > 0$, and $\delta$ are arbitrary, and $\gamma^1_C, \dots, \gamma^N_C$ are not necessarily optimal policies under $p^1\tau^1_m, \dots, p^N\tau^N_m$, we have that: \[\lim_{m\rightarrow \infty}J^*(P,p^1\tau^1_m, \dots, p^N\tau^N_m) \leq J^*(P,p^1,\dots, p^N).\] 

Applying Lemma \ref{GarblingLemma}, we know that for every $m \in \mathbb{Z}$, 
\[J^*(P,p^1\tau^1_m, \dots,p^N\tau^N_m) \geq J^*(P,p^1, \dots,p^N).\]

Thus we get $\lim_{m \rightarrow \infty}J^*(P,p^1\tau^1_m, \dots,p^N\tau^N_m) = J^*(P,p^1, \dots,p^N)$. This completes the proof. 
\end{proof}

\section{Discussion and Conclusion}
In this paper, we presented results showing that all discrete-time sequential stochastic control (single-agent or multi-agent) problems with continuous and bounded cost functions and convex action spaces are $\epsilon$-away (in optimal cost) from being static reducible. 

The static reduction method has been used in many results to show existence of team-optimal solutions, such as \cite{gupta2014existence}, \cite{YukselSaldiSICON17}, \cite{YukselWitsenStandardArXiv}, \cite{saldiyukselGeoInfoStructure}. Furthermore, \cite{saldiyuksellinder2017finiteTeam} uses the static reduction method to develop results showing that finite models can be used to approximate general team problems with arbitrarily small error in cost. Thus, the results presented here show that these previous existence and approximation results extend (with arbitrarily small error) to all team problems with continuous and bounded cost functions and convex action spaces. 

Continuous-time generalizations as well as filtering theoretic applications of our result will be interesting given the importance of non-degeneracy in facilitating mathematical analysis in the theory of non-linear filtering (see \cite{hijab1984asymptotic,reddy2021some,heunis1987non,baras1988dynamic,baras1982dynamic} for related, often large-deviations theoretic, studies) in addition to establishing related results for fully observed or partially observed controlled diffusions (see e.g. \cite[Chapter 7]{arapostathis2012ergodic} where challenges involving non-degenerate diffusions and relations with viscosity solutions are emphasized). Related studies, under strong regularity conditions or specific models, include \cite{ciampa2021vanishing} and \cite{bianchini2005vanishing}). 
\bibliographystyle{siam}
\bibliography{SerdarBibliography}

\end{document}